\newcommand{\vertiii}[1]{{\left\vert\kern-0.25ex\left\vert\kern-0.25ex\left\vert #1 
    \right\vert\kern-0.25ex\right\vert\kern-0.25ex\right\vert}}
\newtheorem{theorem}{Theorem}[section]
\newtheorem{lemma}{Lemma}[section]
\newtheorem{remark}{Remark}[section]
\newtheorem{corollary}{Corollary}[section]
\numberwithin{equation}{section}
\begin{document}
\title{Further bounds on $p$-numerical radii of operators via generalized Aluthge transform}

\author{Satyajit Sahoo}
\address{(Satyajit Sahoo) Department of Mathematics, 
	School of Basic Sciences, Indian Institute of Technology Bhubaneswar, Odisha 752050, India.}
\email{ssahoomath@gmail.com}
\subjclass[2020]{47A12, 47A30, 47A63, 47TB10.}
\keywords{Schatten $p$-norm, $p$-numerical radius, operator norm, $(f,g)$-Aluthge transform}

\begin{abstract}
The main aim of this article is to establish several $p$-numerical radius inequalities via the $(f,g)$-Aluthge transform of Hilbert space operators and operator matrices. Furthermore, various classical numerical radius and norm inequalities for Hilbert space operators are also discussed. The bounds obtained in this work improve upon several well-known earlier results. 
\end{abstract}
\maketitle
\pagestyle{myheadings}
\markboth{\centerline{}}
{\centerline{}}
\bigskip
\bigskip
\section{Introduction}
The numerical radius is a concept in functional analysis and operator theory, primarily used to measure the ``size" of operators in a Hilbert space. It is defined for a bounded linear operator 
$T$ on a Hilbert space 
$\mathcal{H}$.	Let $(\mathcal{H},\langle \cdot, \cdot \rangle)$ be a separable complex Hilbert space, and denote by $\mathfrak{L}(\mathcal{H})$ the space of all bounded linear operators on $\mathcal{H}$. The modulus of $T$ is given by $|T| = (T^*T)^{1/2}$. For $T\in \mathfrak{L}(\mathcal{H})$, let $T=\Re(T)+i\Im(T)$ be the Cartesian decomposition of $T$, while its real and imaginary parts are defined as $\Re (T) = \frac{T + T^*}{2}$ and $\Im(T) = \frac{T - T^*}{2i}$, respectively. An operator $T$ is called positive, denoted $T \geq 0$, if $\langle Tx, x \rangle \geq 0$ for all $x \in \mathcal{H}$. An operator $T$ is self-adjoint (or Hermitian) if $T = T^*$. The set of positive operators forms a convex cone in $\mathfrak{L}(\mathcal{H})$, inducing the partial order $\geq$ on the set of self-adjoint operators: for Hermitian operators $A$ and $B$, we write $A \geq B$ if and only if $A - B \geq 0$. This order is known as the L\"{o}wner order. It is evident that $|T| \geq 0$, and that $\Re(T)$ and $\Im(T)$ are self-adjoint for any operator $T \in \mathfrak{B}(\mathcal{H})$. 		
		An operator $T$ is normal if it satisfies $T^*T = TT^*$, and it is unitary if $T^*T = TT^* = I$.

Let $\mathfrak{K}(\mathcal{H})$ denote the ideal of compact operators on the Hilbert space $\mathcal{H}$.  For $T\in\mathfrak{K}(\mathcal{H})$ the singular values of $T$, denoted by $s_1(T), s_2(T), \dots$, correspond to the eigenvalues of the positive operator $|T|$ and are arranged in decreasing order, accounting for multiplicity. Furthermore, let 
		\begin{equation*}
			\mathcal{S}:=\left\{T\in\mathfrak{K}(\mathcal{H}):\, \sum_{j=1}^\infty s_j(T)<\infty
			\right\}.
		\end{equation*}
		Operators in $\mathcal{S}$ are called the trace class operators.
		The trace functional, denoted by $\text{tr}(\cdot)$, is defined on $\mathcal{S}$ as
		\begin{equation}\label{traza}
			\text{tr}(T) = \sum_{j=1}^{\infty} \langle Te_j, e_j\rangle,\quad T\in\mathcal{S},
		\end{equation}
		where $\{e_j\}_{j=1}^{\infty}$ forms an orthonormal basis for the Hilbert space $\mathcal{H}$. It is worth noting that this definition coincides with the standard trace definition when $\mathcal{H}$ is finite-dimensional. The series in \eqref{traza} converges absolutely, and its value remains unchanged regardless of the choice of basis.
		
		Additionally, let us clarify the definition of the Schatten $p$-class with $p \geq 1$. An operator $T$ belongs to the Schatten $p$-class, denoted as $\mathfrak{C}_p(\mathcal{H})$, if the sum of the $p$-th powers of its singular values is finite. More precisely, $T \in \mathfrak{C}_p(\mathcal{H})$ if 
		
		\[
		\text{tr}(|T|^p) = \sum_{j=1}^{\infty} s_j(T)^p < \infty.
		\]
		The Schatten $p$-norm of $T \in \mathfrak{C}_p(\mathcal{H})$ is given by
		\begin{equation*}
			\|T\|_p := \left[\text{tr}(|T|^p)\right]^\frac{1}{p}.
		\end{equation*}
         We remark here that for $p=\infty$, the Schatten $p$-norm is usual operator norm $\|T\|=\sup\limits_{\|x\|=1}\|Tx\|.$ When $p=2$, the ideal $\mathfrak{C}_2(\mathcal{H})$ is referred to as the Hilbert--Schmidt class. In this case, $\mathfrak{C}_2(\mathcal{H})$ forms a Hilbert space with the inner product $\langle T, S\rangle_2 = \text{tr}(TS^*)$. Also, when $p=1$, we obviously have that $\mathfrak{C}_1(\mathcal{H})=\mathcal{S}$.
		
For every $T, S\in \mathfrak{C}_p(\mathcal{H}), 0<p\leq \infty$, we obtain the following relations:
\begin{align*}
    \|T\|_{rp}^r=\||T|^r\|_p=\||T^*|\|_p ~\mbox{for}~r>0,
\end{align*}
and 
\begin{align}\label{Eq_1.2}
    \left\|\begin{bmatrix}
        T &0\\
        0 & S    \end{bmatrix}\right\|_p=\left\|\begin{bmatrix}
        0 &T\\
        S & 0
    \end{bmatrix}\right\|_p=
\left\{ \begin{array}{lll}
(\|T\|_p^p+\|S\|_p^p)^{1/p} & \textnormal{for $ 0<p<\infty,$}\\
\max\big\{\|T\|, \|S\|\big\}& \textnormal{for $ p=\infty$}. 
\end{array} \right.
\end{align}

For $1\leq p\leq q\leq \infty$, the Schatten $p$-norm of $T$ satisfies the monotonicity property 
$$\|T\|_\infty\leq \|T\|_q\leq \|T\|_p\leq \|T\|_1.$$
Moreover, if $T\in \in \mathfrak{C}_p(\mathcal{H})$ and $S\in\mathfrak{L}(\mathcal{H})$, then
\begin{align}\label{Eq_2.2}
    \|TS\|_p\leq \|T\|_p\|S\|~~\mbox{and}~\|ST\|_p\leq \|S\|\|T\|_p.
\end{align}
For $1\leq p\leq \infty$, the $p$-numerical radius of $T\in \mathfrak{C}_p(\mathcal{H})$ is defined by 
$$w_p(T)=\sup_{\theta\in\mathbb{R}}\|\Re(e^{i\theta}T)\|_p.$$
Similarly, $w_p(.)$ is defined for the case $0<p<1$. Note that $w_p(.)$ is weakly unitarily invariant i.e. $w_p(UTU^*)=w_p(T)$ for every unitary operator $U\in \mathfrak{L}(\mathcal{H})$ and for every $T\in \mathfrak{C}_p(\mathcal{H})$. It is known that for $1\leq p\leq \infty$, we obtain the following inequality 
\begin{align}
\frac{1}{2}\|T\|_p\leq w_p(T)\leq \|T\|_p.
\end{align}
If $T$ is self-adjoint, then $w_p(T)=\|T\|_p$. For more details, the reader may (see\cite{Benmakhlouf_Hirzallah_Kittaneh_LAMA_2021, Frakis_Kittaneh_Soltani_JAMC_2024, Frakis_Kittaneh_Soltani__2024, Frakis_Kittaneh__2024, Bhunia_Sahoo_2025}) and the references therein. 
The authors in \cite{AAbuOmarFKittanehLAA2019} showed some properties of $w_2(\cdot)$ that come along with those of $w(\cdot).$ For example, they showed in the same reference that if $T\in \mathfrak{C}_2$, then
\begin{align}\label{intr105}
   w_2(T)=\sqrt{\frac{1}{2}\|T\|_2^2+\frac{1}{2}|tr T^2|},
\end{align}
which implies 
\begin{align}\label{intr106}
    w_2(T)=\frac{1}{\sqrt{2}}\|T\|_2,\quad{\text{if}}\;T^2=0.
\end{align} 
There are other results involving classical numerical radius and Hilbert-Schmidt numerical radius have been established in \cite{Aici_Frakis_Kittaneh_Acta_2023, Aici_Frakis_Kittaneh_NFAO_2023, Aici_Frakis_Kittaneh_RCMP_2023, Aldalabih_Kittaneh_LAA_2019, SND, SND1, Sababheh_Moradi_Sahoo_LAMA_2024, Sababheh_Djordjević_MoradiCAOT_2024, Sahoo_Rout_AIOT_2022, Sahoo_Moradi_Sababheh_ACTA_Sz_math_2024, Sahoo_Sababheh_Filomat_2021, Zamani_LAA_2023, Bhunia_Kittaneh_Sahoo_LAA_2025} and the references therein.  

Let $T=U|T|$ be the polar decomposition of $T$. Then $T^*=U^*|T^*|$ is the polar decomposition of $T^*$. The Aluthge
transform of the operator $T$, denoted as $\widetilde{T}$ is
defined as $\widetilde{T}= |T|^{\frac{1}{2}}U|T|^{\frac{1}{2}}$. This transform appeared in \cite{Aluthge_IEOT_1990} for the first time.
In \cite{KOkubo2003} a more general notion called 
$t$-Aluthge transform, denoted by $\widetilde{T}_t$, and 
defined by $\widetilde{T}_t=|T|^{t}U|T|^{1-t}$ for $0 \leq t\leq 1$ was introduced.

The following inequality is evident.
\begin{align}\label{eq_00}
    \|\tilde{A}_{t}\|_2\leq \||A|^{t}U|A|^{1-t}\|_2\leq \||A|^{t}\|_2\||A|^{1-t}\|_2=\|A\|_2^{t}\|A\|_2^{1-t}=\|A\|_2.
\end{align}

The $t$-Aluthge transform coincides with the usual Aluthge transform for $t=\frac{1}{2}$.
When $t=1$, the operator $\widetilde{T}_1=|T|U$ is called the 
Duggal transform of $T\in \mathfrak{L}(\mathcal{H})$.  In
\cite{KShebrawiMBakherad2018}, the generalized Aluthge transform of the
operator $T$, denoted by $\widetilde{T}_{f,g},$ was introduced. It is defined by
$\widetilde{T}_{f,g}=f(|T|)Ug(|T|)$, where $f, g$ are non-negative
continuous functions such that $f(x)g(x)=x$ $(x\geq 0)$ and $T\in
\mathfrak{L}(\mathcal{H})$. The Aluthge transform has appeared in many results treating the numerical radius, as seen in \cite{Alomari_GJM_2024, Kittaneh_LAMA, Sheybani_Vietnam_2023, TYamazaki2007}.
Yamazaki in \cite{TYamazaki2007} proved that if $T\in\mathfrak{L}(\mathcal{H})$, then
\begin{align}\label{eq_1.8}
    w(T)\leq \frac{1}{2}(\|T\|+w(\widetilde{T})).
\end{align}
The author's of \cite{Amer_Kittaneh_Studia_2013} refine the inequality \eqref{eq_1.8} in a following way 
\begin{align}
     w(T)\leq \frac{1}{2}(\|T\|+\min_{0\leq t\leq 1}w(\widetilde{T}_t)).
\end{align}
Recently, the authors of \cite{Frakis_Kittaneh_Soltani_LAMA_2025} developed some new inequalities and equalities for the $p$-numerical radius using $t$-Aluthge transform. Further, they obtained a related Yamazaki-type inequality involving $p$-numerical radius. 

The main objective of this paper is to obtain several $p$-numerical radius inequalities via the $(f,g)$-Aluthge transform of Hilbert space operators and operator matrices. We also obtain some bounds which improve earlier well-known results.

Davidson and Power \cite{Davidson_Power_1986} proved that if $T$ and $S$ are positive operators in $\mathfrak{L}(\mathcal{H})$, then
\begin{align}\label{}
       \|T+S\|\leq \max\{\|T\|, \|S\|\}+\|TS\|^{1/2}.
    \end{align}
A refinement of this
inequality has been established in the following lemma.
\begin{lemma}\cite{Kittaneh_JFA_1997}
    Let $T, S$ are positive operators in $\mathfrak{L}(\mathcal{H})$. Then
    \begin{align}\label{Ineq_Kittaneh_JFA_1997}
       \|T+S\|\leq \max\{\|T\|, \|S\|\}+\|T^{1/2}S^{1/2}\|.
    \end{align}
\end{lemma}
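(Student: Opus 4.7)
The plan is to realize $T+S$ as a product $X^{*}X$ for a suitable column operator $X$ and then exploit the identity $\|X^{*}X\|=\|XX^{*}\|$ to convert the question into a norm estimate for a $2\times 2$ operator matrix, which splits naturally into a diagonal and an off-diagonal part.

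First, I would define $X=\begin{bmatrix} T^{1/2} \\ S^{1/2} \end{bmatrix}$, viewed as an operator from $\mathcal{H}$ into $\mathcal{H}\oplus\mathcal{H}$. A direct computation gives $X^{*}X = T+S$, whence
\[
\|T+S\| \;=\; \|X^{*}X\| \;=\; \|XX^{*}\| \;=\; \left\|\begin{bmatrix} T & T^{1/2}S^{1/2} \\ S^{1/2}T^{1/2} & S \end{bmatrix}\right\|.
\]
I would then decompose this block matrix as
\[
\begin{bmatrix} T & T^{1/2}S^{1/2} \\ S^{1/2}T^{1/2} & S \end{bmatrix} \;=\; \begin{bmatrix} T & 0 \\ 0 & S \end{bmatrix} + \begin{bmatrix} 0 & T^{1/2}S^{1/2} \\ S^{1/2}T^{1/2} & 0 \end{bmatrix}
\]
and apply the triangle inequality for the operator norm.

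To finish, I would invoke \eqref{Eq_1.2} with $p=\infty$: the diagonal summand has norm $\max\{\|T\|,\|S\|\}$, and the off-diagonal summand, which is self-adjoint of the form $\begin{bmatrix} 0 & A \\ A^{*} & 0 \end{bmatrix}$ with $A=T^{1/2}S^{1/2}$ (so that $A^{*}=S^{1/2}T^{1/2}$), has norm $\|A\|=\|T^{1/2}S^{1/2}\|$. Summing these two estimates yields the claimed bound.

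The main subtlety is spotting the factorization $T+S = X^{*}X$, since it is precisely the passage from $X^{*}X$ to $XX^{*}$ that surfaces the cross term $T^{1/2}S^{1/2}$ appearing in the refined inequality; working directly with $T+S$ would never reveal this term. Once this factorization trick is in place, the remainder of the argument is a short computation using the block-matrix norm identities already recorded in \eqref{Eq_1.2}.
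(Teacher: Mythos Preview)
Your argument is correct. Note, however, that the present paper does not supply its own proof of this lemma; it is quoted from \cite{Kittaneh_JFA_1997} as a known result. Your proof is in fact precisely Kittaneh's original argument from that reference: the factorization $T+S=X^{*}X$ with $X=\begin{bmatrix} T^{1/2}\\ S^{1/2}\end{bmatrix}$, the swap $\|X^{*}X\|=\|XX^{*}\|$, and the diagonal/off-diagonal split of the resulting $2\times 2$ block. So there is nothing to compare against within this paper, but your reconstruction matches the source proof essentially verbatim.
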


The inequality \eqref{Ineq_Kittaneh_JFA_1997} has been generalized in \cite{Abu-Omar_Kittaneh_2015}.  
\begin{lemma}\cite{Abu-Omar_Kittaneh_2015}
If $T$ and $S$ are operators
in $\mathfrak{L}(\mathcal{H})$, then
   \begin{align}\label{Eq_1.12}
     \|T+S^*\| \leq \max\bigg\{\|S\|,\|T\|\bigg\}+\max\bigg\{\||S|^{\frac{1}{2}}|T^*|^{\frac{1}{2}}\|,\||T|^{\frac{1}{2}}|S^*|^\frac{1}{2}\|\bigg\}.
 \end{align}
\end{lemma}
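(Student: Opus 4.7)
The strategy is to factor $\|T+S^*\|$ into a geometric mean $\sqrt{\||T|+|S^*|\|\cdot\||T^*|+|S|\|}$ via the mixed Schwarz inequality, then apply the positive-operator estimate \eqref{Ineq_Kittaneh_JFA_1997} to each factor, and finally collapse the geometric mean to the claimed sum of maxima.

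To obtain the factored bound, I would fix unit vectors $x,y\in\mathcal{H}$, take the polar decompositions $T=U|T|$ and $S=V|S|$, and apply the mixed Schwarz inequality to get
\begin{align*}
|\langle Tx,y\rangle|&\leq \||T|^{1/2}x\|\,\||T^*|^{1/2}y\|,\\
|\langle S^*x,y\rangle|&=|\langle Sy,x\rangle|\leq \||S^*|^{1/2}x\|\,\||S|^{1/2}y\|.
\end{align*}
Adding these and invoking the discrete Cauchy--Schwarz inequality $ab+cd\leq\sqrt{a^2+c^2}\sqrt{b^2+d^2}$ regroups the right-hand side as $\sqrt{\langle(|T|+|S^*|)x,x\rangle}\cdot\sqrt{\langle(|T^*|+|S|)y,y\rangle}$. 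Taking the supremum over $\|x\|=\|y\|=1$ yields
$$\|T+S^*\|\leq \sqrt{\||T|+|S^*|\|\cdot\||T^*|+|S|\|}.$$

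Since $|T|+|S^*|$ and $|T^*|+|S|$ are each sums of two positive operators, I would then apply \eqref{Ineq_Kittaneh_JFA_1997}, together with the identities $\||T|\|=\|T\|=\||T^*|\|$ and $\||S|\|=\|S\|=\||S^*|\|$, to obtain
\begin{align*}
\||T|+|S^*|\|&\leq \max\{\|T\|,\|S\|\}+\||T|^{1/2}|S^*|^{1/2}\|,\\
\||T^*|+|S|\|&\leq \max\{\|T\|,\|S\|\}+\||S|^{1/2}|T^*|^{1/2}\|,
\end{align*}
where in the second inequality I have used $\||T^*|^{1/2}|S|^{1/2}\|=\||S|^{1/2}|T^*|^{1/2}\|$ (taking adjoints).

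The finishing move is the scalar inequality $\sqrt{(M+\alpha)(M+\beta)}\leq M+\max\{\alpha,\beta\}$ for $M,\alpha,\beta\geq 0$, which is immediate after squaring: assuming without loss of generality $\alpha\leq\beta$, it reduces to $M+\alpha\leq M+\beta$. Substituting the two bounds above into the factored estimate and applying this scalar inequality produces \eqref{Eq_1.12}. The only delicate point, and the one I would flag as the main place to exercise care, is that a naive AM--GM bound on $\sqrt{(M+\alpha)(M+\beta)}$ would yield $\tfrac{1}{2}(\alpha+\beta)$ in place of $\max\{\alpha,\beta\}$ and would therefore be too weak; the sharper collapse above is precisely what produces the right-hand side of \eqref{Eq_1.12}.
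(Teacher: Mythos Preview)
Your argument is correct and is essentially the standard route to \eqref{Eq_1.12}: factor via the mixed Schwarz inequality to reach $\|T+S^*\|\leq\sqrt{\||T|+|S^*|\|\,\||T^*|+|S|\|}$, apply \eqref{Ineq_Kittaneh_JFA_1997} to each positive sum, and pass to a scalar estimate. Note, however, that the present paper does not supply its own proof of this lemma; it is simply quoted from \cite{Abu-Omar_Kittaneh_2015}, so there is no in-paper argument to compare against.

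One point in your closing paragraph deserves correction. You write that the AM--GM bound $\sqrt{(M+\alpha)(M+\beta)}\leq M+\tfrac12(\alpha+\beta)$ ``would therefore be too weak'' and that the collapse to $M+\max\{\alpha,\beta\}$ is ``sharper''. The opposite is true: since $\tfrac12(\alpha+\beta)\leq\max\{\alpha,\beta\}$, the AM--GM estimate is the \emph{stronger} inequality. In fact, if you had used AM--GM at that step you would have obtained directly
\[
\|T+S^*\|\leq \max\{\|T\|,\|S\|\}+\tfrac12\bigl(\||T|^{1/2}|S^*|^{1/2}\|+\||S|^{1/2}|T^*|^{1/2}\|\bigr),
\]
which is precisely Shebrawi's refinement \eqref{Eq_1.13} of \eqref{Eq_1.12}. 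Your proof of \eqref{Eq_1.12} remains valid because you are bounding above by the larger quantity, but the justification you offer for avoiding AM--GM is inverted.
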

The refinement of inequality \eqref{Eq_1.12} can be stated
as follows.
\begin{lemma}\cite{Shebrawi_LAA_2017}
If $T$ and $S$ are operators
in $\mathfrak{L}(\mathcal{H})$, then
   \begin{align}\label{Eq_1.13}
     \|T+S^*\| \leq \max\bigg\{\|S\|,\|T\|\bigg\}+\frac{1}{2}\bigg(\||S|^{\frac{1}{2}}|T^*|^{\frac{1}{2}}\|+\||T|^{\frac{1}{2}}|S^*|^\frac{1}{2}\|\bigg).
 \end{align}
\end{lemma}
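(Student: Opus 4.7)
The plan is to factor $T+S^*$ through a product of block operators and reduce the estimate to two applications of Kittaneh's sum inequality \eqref{Ineq_Kittaneh_JFA_1997}, closed out by AM--GM. Using the polar decompositions $T=U|T|$ and $S=V|S|$ (so $S^*=V^*|S^*|$), together with the standard identities $U|T|U^*=|T^*|$ and $V^*|S^*|V=|S|$ (the latter because $V^*V$ acts as the identity on $\overline{\mathrm{ran}(|S|)}$), I would write
\begin{equation*}
T+S^*=U|T|^{1/2}\cdot|T|^{1/2}+V^*|S^*|^{1/2}\cdot|S^*|^{1/2}=AB,
\end{equation*}
where
\begin{equation*}
A=\begin{bmatrix} U|T|^{1/2} & V^*|S^*|^{1/2}\end{bmatrix}\quad\text{and}\quad B=\begin{bmatrix} |T|^{1/2}\\ |S^*|^{1/2}\end{bmatrix}.
\end{equation*}

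Next, a direct computation gives $AA^*=U|T|U^*+V^*|S^*|V=|T^*|+|S|$ and $B^*B=|T|+|S^*|$, so submultiplicativity of the operator norm yields
\begin{equation*}
\|T+S^*\|^{2}\le \|A\|^{2}\|B\|^{2}=\bigl\||T^*|+|S|\bigr\|\cdot\bigl\||T|+|S^*|\bigr\|.
\end{equation*}
Since each factor on the right is the norm of a sum of two positive operators, applying \eqref{Ineq_Kittaneh_JFA_1997} to each (and noting $\||T^*|\|=\|T\|$ and $\||S^*|\|=\|S\|$) produces
\begin{equation*}
\|T+S^*\|^{2}\le (m+a)(m+b),
\end{equation*}
where $m=\max\{\|T\|,\|S\|\}$, $a=\||S|^{1/2}|T^*|^{1/2}\|$, and $b=\||T|^{1/2}|S^*|^{1/2}\|$. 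A single application of AM--GM, $\sqrt{(m+a)(m+b)}\le m+\tfrac{1}{2}(a+b)$, then delivers \eqref{Eq_1.13} exactly.

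The main point requiring care is the pair of identities $U|T|U^*=|T^*|$ and $V^*|S^*|V=|S|$, which ensure that $AA^*$ collapses cleanly to $|T^*|+|S|$ without leaving residual projection factors; this is what makes the block factorisation effective and lets Kittaneh's inequality be applied coordinate-wise. Once these identities are in hand, the rest of the argument is routine algebra, and the factor $1/2$ in \eqref{Eq_1.13} is simply the artefact of AM--GM sharpening the crude bound $\sqrt{(m+a)(m+b)}\le m+\max\{a,b\}$ that yields the earlier inequality \eqref{Eq_1.12}.
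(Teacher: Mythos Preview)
Your argument is correct. The identities $U|T|U^*=|T^*|$ and $V^*|S^*|V=|S|$ hold for partial isometries coming from polar decompositions (essentially because $U^*U|T|=|T|$ and $VV^*|S^*|=|S^*|$), so the block factorisation yields $\|T+S^*\|^2\le\||T^*|+|S|\|\cdot\||T|+|S^*|\|$ exactly as you claim, and the two applications of \eqref{Ineq_Kittaneh_JFA_1997} followed by AM--GM finish the job cleanly.

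This is, however, a genuinely different route from the one the paper takes. The paper does not prove the lemma directly (it cites \cite{Shebrawi_LAA_2017}), but it recovers \eqref{Eq_1.13} as the $p=\infty$, $t=\tfrac12$ specialisation of Corollary~\ref{Cor_2.3}, which in turn comes from applying the Aluthge-transform bound of Theorem~\ref{Thm_1} to the off-diagonal block $\mathbb{T}=\left[\begin{smallmatrix}0&T\\S&0\end{smallmatrix}\right]$ and then using $\|T+S^*\|_p\le 2^{1-1/p}w_p(\mathbb{T})$. The key technical input there is Lemma~\ref{Lemma_1} (Kittaneh's rearrangement inequality $\|AB\|_p\le\|\Re(BA)\|_p$), not the positive-sum inequality \eqref{Ineq_Kittaneh_JFA_1997} you use. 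Your approach is more elementary and self-contained for this single inequality, bypassing numerical radii and the Aluthge transform entirely; the paper's approach has the advantage of situating \eqref{Eq_1.13} inside a family of Schatten-$p$ inequalities parametrised by $p$, $t$, and the pair $(f,g)$, of which \eqref{Eq_1.13} is just one endpoint.
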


\begin{lemma}\cite{Kittaneh_LAA_1992}\label{Lemma_1}
    Let $T, S\in\mathfrak{C}_p(\mathcal{H})$ be such that $TS$ is self-adjoint and $1\leq p\leq \infty$.
    Then
    $$\|TS\|_p\leq \|\Re(ST)\|_p.$$
\end{lemma}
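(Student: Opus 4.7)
\medskip\noindent\textit{Proof plan.}
The plan is to reduce the inequality to a Ky Fan--type majorization of eigenvalues. Since $TS$ is self-adjoint, $\sigma(TS)\subset\mathbb{R}$; and since the nonzero spectra of $TS$ and $ST$ coincide, the eigenvalues of $ST$ are also real (possibly together with $0$). Enumerating in order of decreasing modulus, $|\lambda_j(ST)|=|\lambda_j(TS)|=s_j(TS)$, so that
\[
\|TS\|_p^p=\sum_j s_j(TS)^p=\sum_j|\lambda_j(ST)|^p.
\]

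Next I would invoke Ky Fan's classical majorization: for any compact operator $M$, the real parts of its eigenvalues are majorized by the eigenvalues of $\Re(M)$, that is, $\Re\lambda(M)\prec\lambda(\Re M)$. Applied to $M=ST$, which has real spectrum, this reduces to $\lambda(ST)\prec\lambda(\Re(ST))$. Since $\phi(x)=|x|^p$ is convex on $\mathbb{R}$ for $1\le p<\infty$, the Hardy--Littlewood--P\'olya theorem converts this majorization into
\[
\sum_j|\lambda_j(ST)|^p\le\sum_j|\lambda_j(\Re(ST))|^p=\|\Re(ST)\|_p^p,
\]
the last equality holding because $\Re(ST)$ is self-adjoint. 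Combining with the identity for $\|TS\|_p^p$ and taking $p$-th roots finishes the case $1\le p<\infty$.

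For $p=\infty$ I would argue via spectral radii: $\|TS\|_\infty=\sup|\sigma(TS)|=\sup|\sigma(ST)|$, and each real $\lambda\in\sigma(ST)$ lies in the closed numerical range of $ST$, hence in $\overline{W(\Re(ST))}=[\min\sigma(\Re(ST)),\max\sigma(\Re(ST))]$, which gives $|\lambda|\le\|\Re(ST)\|_\infty$.

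The main obstacle is justifying Ky Fan's majorization in the present infinite-dimensional setting, but since operators in $\mathfrak{C}_p(\mathcal{H})$ are compact with point spectrum accumulating only at $0$, the statement is a routine extension of the classical finite-dimensional theorem once both eigenvalue sequences are padded with zeros in a compatible way.
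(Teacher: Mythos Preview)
The paper does not prove this lemma; it is simply quoted from \cite{Kittaneh_LAA_1992} and used as a tool. Your argument---identify $s_j(TS)=|\lambda_j(TS)|=|\lambda_j(ST)|$ via self-adjointness of $TS$ and the $TS/ST$ spectral correspondence, invoke Ky Fan's majorization $\Re\lambda(M)\prec\lambda(\Re M)$ with $M=ST$ (whose spectrum is real), and then apply Hardy--Littlewood--P\'olya with the convex function $\phi(t)=|t|^p$---is precisely the classical route and is essentially how Kittaneh's original matrix result is proved. Your separate $p=\infty$ argument via the numerical range inclusion $\sigma(ST)\subset\overline{W(ST)}$ and $\Re W(ST)=W(\Re(ST))$ is also correct.

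One refinement to your closing paragraph: for $p>2$ the product $ST\in\mathfrak{C}_{p/2}$ need not be trace class, so the ``equality of total sums'' that upgrades weak majorization to full majorization is not available, and $|t|^p$ is not monotone, so HLP does not apply directly from weak majorization alone. The fix is either the finite-rank approximation you allude to, or to note that Ky Fan applied to both $ST$ and $-ST$ yields \emph{two-sided} weak majorization of the real eigenvalue sequences, which does suffice for convex $\phi$ with $\phi(0)=0$. This is standard, so calling it ``routine'' is fair, but it is worth being explicit about which form of majorization is in play.
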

\begin{lemma}\cite{Bottazzi_Conde_OAm_2021}\label{Lemma_2}
    Let $T, S\in\mathfrak{C}_p(\mathcal{H})$. 
    Then
    $$2^{\frac{-1}{p}}\|T\|_p\leq w_p(T)\leq \|T\|_p,~~~~~~1\leq p\leq 2,$$
    and 
    $$2^{\frac{1}{p}-1}\|T\|_p\leq w_p(T)\leq \|T\|_p,~~~~~~2\leq p\leq \infty.$$
\end{lemma}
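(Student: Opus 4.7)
The plan is to prove the two bounds separately: the upper bound is immediate from the triangle inequality, while the lower bound rests on a parallelogram-type inequality for Schatten norms.

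For the upper bound $w_p(T)\leq\|T\|_p$, I would write $\Re(e^{i\theta}T)=\tfrac{1}{2}(e^{i\theta}T+e^{-i\theta}T^*)$, apply the triangle inequality in $\|\cdot\|_p$ together with $\|T^*\|_p=\|T\|_p$, and take the supremum over $\theta\in\mathbb{R}$. This argument is uniform in $p$.

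For the lower bound, the approach is to invoke the Cartesian decomposition $T=A+iB$ with $A=\Re(T)$ and $B=\Im(T)$ self-adjoint. Choosing $\theta=0$ and $\theta=\pi/2$ in the definition of $w_p(T)$ gives $\Re(T)=A$ and $\Re(iT)=-B$, so $\max\{\|A\|_p,\|B\|_p\}\leq w_p(T)$. The remaining task is to dominate $\|T\|_p$ by a suitable multiple of this maximum, and this is where the regime of $p$ begins to matter. For $1<p\leq 2$ I would invoke Clarkson's second inequality for Schatten classes with $f=A$, $g=iB$; since $(A+iB)^{*}=A-iB$ forces $\|A\pm iB\|_p=\|T\|_p$, it collapses to $\|T\|_p^{p}\leq\|A\|_p^{p}+\|B\|_p^{p}\leq 2\max\{\|A\|_p,\|B\|_p\}^p$, producing the constant $2^{-1/p}$. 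For $2\leq p<\infty$ I would use Clarkson's first inequality $\|f+g\|_p^{p}+\|f-g\|_p^{p}\leq 2^{p-1}(\|f\|_p^{p}+\|g\|_p^{p})$ in the same substitution to obtain $\|T\|_p^{p}\leq 2^{p-2}(\|A\|_p^{p}+\|B\|_p^{p})\leq 2^{p-1}\max\{\|A\|_p,\|B\|_p\}^p$, which gives the constant $2^{1/p-1}$. The two endpoints $p=1$ and $p=\infty$ are handled directly by the triangle inequality, yielding $w_p(T)\geq\tfrac{1}{2}\|T\|_p$ and matching both $2^{-1/1}$ and $2^{1/\infty-1}$.

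The main obstacle will be selecting and correctly applying the right Clarkson-type (McCarthy) inequality in each range, since one must use the non-commutative forms for Schatten classes and verify that the constants line up exactly with $2^{-1/p}$ and $2^{1/p-1}$. An alternative attack via the averaging identity $\tfrac{1}{2\pi}\int_0^{2\pi}\|\Re(e^{i\theta}T)\|_p^p\,d\theta$ works cleanly at $p=2$ using the identity $\Re(T)^2+\Im(T)^2=\tfrac{1}{2}(|T|^2+|T^*|^2)$, but it appears difficult to push through for general $p$, which is precisely why I would fall back on the Clarkson route above.
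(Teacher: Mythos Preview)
The paper does not prove this lemma; it is quoted verbatim from Bottazzi--Conde \cite{Bottazzi_Conde_OAm_2021} and used as a black box, so there is no ``paper's own proof'' to compare against.

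Your outline is correct and is in fact the standard route to this result. The upper bound is exactly as you say. For the lower bound, the reduction to $\max\{\|\Re T\|_p,\|\Im T\|_p\}\le w_p(T)$ via $\theta=0,\pi/2$ is the natural first move, and the two Clarkson--McCarthy inequalities you invoke are precisely the non-commutative tools needed: for $2\le p<\infty$ the inequality $\|X+Y\|_p^p+\|X-Y\|_p^p\le 2^{p-1}(\|X\|_p^p+\|Y\|_p^p)$ with $X=A$, $Y=iB$ and $\|A\pm iB\|_p=\|T\|_p$ gives $\|T\|_p^p\le 2^{p-2}(\|A\|_p^p+\|B\|_p^p)$, and for $1<p\le 2$ Clarkson's second inequality $\|X+Y\|_p^q+\|X-Y\|_p^q\le 2(\|X\|_p^p+\|Y\|_p^p)^{q/p}$ collapses (after raising both sides to the power $p/q$) to $\|T\|_p^p\le\|A\|_p^p+\|B\|_p^p$, exactly as you wrote. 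Both inequalities are McCarthy's theorems for $\mathfrak{C}_p$, so the ``main obstacle'' you anticipate is already handled in the literature; you just need to cite them. The endpoint cases $p=1,\infty$ via the plain triangle inequality are fine. One small wording point: at $p=2$ your averaging remark is not needed, since McCarthy's inequalities become the parallelogram identity there and both regimes meet with the same constant $2^{-1/2}$.
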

\section{Main results}

\begin{theorem}\label{Thm_1}
    Let $T\in\mathfrak{C}_p(\mathcal{H})$, let $1\leq p\leq \infty$. If $f, g$ are non-negative continuous functions on $[0, \infty)$ such that $f(x)g(x)=x(x\geq 0)$. Then
    \begin{align*}
        w_p(T)\leq 2^{\frac{1}{p}-1}w_p(\widetilde{T}_{f, g})+2^{\frac{1}{p}-2}\|f^2(|T|)+g^2(|T|)\|_p.
    \end{align*}
\end{theorem}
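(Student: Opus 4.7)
The plan is to apply Lemma~\ref{Lemma_1} to a pair of $2\times 2$ block operators whose forward product encodes $2\Re(e^{i\theta}T)$ as a single self-adjoint diagonal block, while their reverse product simultaneously exposes $\widetilde{T}_{f,g}$ on the diagonal and the ``residual'' term $A^*A+B^2$ off-diagonally. I start from the polar decomposition $T=U|T|$ and use the fact that $f(|T|)$ and $g(|T|)$ commute with $f(x)g(x)=x$ to factor $T=(Ug(|T|))\cdot f(|T|) = AB$ and $\widetilde{T}_{f,g} = BA$, where $A:=Ug(|T|)$ and $B:=f(|T|)=B^*$.

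Next I would introduce the block operators
\[
\mathcal{X}_\theta = \begin{pmatrix} e^{i\theta}A & e^{-i\theta}B \\ 0 & 0\end{pmatrix}, \qquad \mathcal{Y} = \begin{pmatrix} B & 0 \\ A^* & 0\end{pmatrix}.
\]
A direct multiplication using $(AB)^*=BA^*$ gives $\mathcal{X}_\theta\mathcal{Y} = \begin{pmatrix} 2\Re(e^{i\theta}T) & 0 \\ 0 & 0\end{pmatrix}$, which is self-adjoint. Lemma~\ref{Lemma_1} therefore yields $\|\mathcal{X}_\theta\mathcal{Y}\|_p \le \|\Re(\mathcal{Y}\mathcal{X}_\theta)\|_p$, while \eqref{Eq_1.2} gives $\|\mathcal{X}_\theta\mathcal{Y}\|_p = 2\|\Re(e^{i\theta}T)\|_p$. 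Computing the opposite product and taking its Hermitian part produces a $2\times 2$ self-adjoint block whose diagonal entries are both $\Re(e^{i\theta}\widetilde{T}_{f,g})$ and whose off-diagonal entries are $\tfrac12 e^{\mp i\theta}(A^*A+B^2)$.

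To bound the right-hand side, split this matrix into its block-diagonal and block-antidiagonal parts; applying the triangle inequality for $\|\cdot\|_p$ together with \eqref{Eq_1.2} on each piece yields $\|\Re(\mathcal{Y}\mathcal{X}_\theta)\|_p \le 2^{1/p}w_p(\widetilde{T}_{f,g}) + 2^{1/p-1}\|A^*A + B^2\|_p$. Since $U$ is a partial isometry, $U^*U\le I$, whence $A^*A = g(|T|)(U^*U)g(|T|) \le g^2(|T|)$ in the L\"owner order; adding $B^2=f^2(|T|)$ and invoking monotonicity of Schatten norms on positives replaces $\|A^*A+B^2\|_p$ by $\|f^2(|T|)+g^2(|T|)\|_p$. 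Dividing by $2$ and taking the supremum over $\theta$ then produces the stated inequality.

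The only substantive step is the block ansatz $(\mathcal{X}_\theta,\mathcal{Y})$: it must be arranged so that $\mathcal{X}_\theta\mathcal{Y}$ is a self-adjoint block equal to $2\Re(e^{i\theta}T)$ (so that Lemma~\ref{Lemma_1} applies) while $\mathcal{Y}\mathcal{X}_\theta$ simultaneously displays $\widetilde{T}_{f,g}$ on the diagonal and the function-of-$|T|$ correction $A^*A+B^2$ off-diagonally. Once this ansatz is in place, the precise constants $2^{1/p-1}$ and $2^{1/p-2}$ appear automatically from the two identical blocks counted by \eqref{Eq_1.2}, and the rest of the argument is just the triangle inequality and L\"owner-order monotonicity.
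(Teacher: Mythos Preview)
Your proof is correct and follows essentially the same route as the paper: both set up the identical $2\times2$ block factorization (your $\mathcal{X}_\theta,\mathcal{Y}$ differ from the paper's pair only in where the phase $e^{-i\theta}$ is placed), apply Lemma~\ref{Lemma_1}, split $\Re(\mathcal{Y}\mathcal{X}_\theta)$ into its diagonal and antidiagonal parts, and finish with \eqref{Eq_1.2}. Your treatment is in fact slightly more careful than the paper's: you keep the off-diagonal term as $A^*A+B^2=g(|T|)U^*Ug(|T|)+f^2(|T|)$ and then pass to $g^2(|T|)+f^2(|T|)$ via $U^*U\le I$ and L\"owner monotonicity, whereas the paper tacitly writes $g(|T|)U^*Ug(|T|)=g^2(|T|)$, which need not hold when $g(0)\neq 0$ and $\ker|T|\neq\{0\}$.
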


\begin{proof}
    Let $T=U|T|$ be the polar decomposition of $T$. Then for any $\theta\in \mathbb{R}$, we have
    \begin{align*}
        \|\Re(e^{i\theta}T)\|_p&=\frac{1}{2}\|e^{i\theta}T+e^{-i\theta}T^*\|_p\\
        &=\frac{1}{2}\|e^{i\theta}U|T|+e^{-i\theta}|T|U^*\|_p\\
         &=\frac{1}{2}\|e^{i\theta}Ug(|T|)f(|T|)+e^{-i\theta}f(|T|)g(|T|)U^*\|_p\\
         &=\frac{1}{2}\left\|\begin{bmatrix}
             e^{i\theta}Ug(|T|) & f(|T|)\\
             0 & 0
         \end{bmatrix}\begin{bmatrix}
            f(|T|) & 0\\
             e^{-i\theta}g(|T|)U^* & 0
         \end{bmatrix}\right\|_p\\
         &\leq \frac{1}{2}\left\|\Re\left(\begin{bmatrix}
            f(|T|) & 0\\
             e^{-i\theta}g(|T|)U^* & 0
         \end{bmatrix}\begin{bmatrix}
             e^{i\theta}Ug(|T|) & f(|T|)\\
             0 & 0
         \end{bmatrix}\right)\right\|_p\\
         &= \frac{1}{2}\left\|\Re\left(\begin{bmatrix}
            e^{i\theta}f(|T|)Ug(|T|) & f^2(|T|)\\
            g^2(|T|) & e^{-i\theta}g(|T|)U^*f(|T|) 
         \end{bmatrix}\right)\right\|_p\\
         &= \frac{1}{2}\left\|\begin{bmatrix}
            \Re(e^{i\theta}\widetilde{T}_{f, g}) & \frac{f^2(|T|)+ g^2(|T|)}{2}\\
           \frac{f^2(|T|)+ g^2(|T|)}{2} & \Re(e^{-i\theta}(\widetilde{T}_{f, g})^*) 
         \end{bmatrix}\right\|_p\\
          &\leq  \frac{1}{2}\left\|\begin{bmatrix}
            \Re(e^{i\theta}\widetilde{T}_{f, g}) & 0\\
           0 & \Re(e^{-i\theta}(\widetilde{T}_{f, g})^*) 
         \end{bmatrix}\right\|_p + \frac{1}{2}\left\|\begin{bmatrix}
           0 & \frac{f^2(|T|)+ g^2(|T|)}{2}\\
           \frac{f^2(|T|)+ g^2(|T|)}{2} & 0 
         \end{bmatrix}\right\|_p\\
         &=2^{\frac{1}{p}-1}\|\Re(e^{i\theta}\widetilde{T}_{f, g})\|_p+2^{\frac{1}{p}-2}\|f^2(|T|)+g^2(|T|)\|_p.
    \end{align*}
    where the first inequality follows from the Lemma \ref{Lemma_1}.
    Taking supremum over $\theta\in \mathbb{R}$ both sides in the above inequality, we obtain our desired result.  
\end{proof}
As a special case for our Theorem \ref{Thm_1}, we have the following corollary, which is already proved in \cite[Theorem 2.3]{Frakis_Kittaneh_Soltani_LAMA_2025}.
\begin{corollary}\label{Cor_1}
    Let $T\in\mathfrak{C}_p(\mathcal{H})$, let $1\leq p\leq \infty$. If $f(x)=x^{1-t}$ , $g(x)=x^t$, $t\in [0, 1]$. Then
    \begin{align*}
        w_p(T)\leq 2^{\frac{1}{p}-1}\min_{0\leq t\leq 1}w_p(\widetilde{T}_{1-t})+2^{\frac{1}{p}-2}\||T|^{2(1-t)}+|T|^{2t}\|_p.
    \end{align*}
\end{corollary}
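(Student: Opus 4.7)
The plan is to derive Corollary \ref{Cor_1} as a direct specialization of Theorem \ref{Thm_1}, since the hypotheses of the corollary correspond to a particular legitimate choice of the functions $f,g$ appearing in the theorem.

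First I would verify the hypothesis. For each fixed $t\in[0,1]$, the functions $f(x)=x^{1-t}$ and $g(x)=x^t$ are non-negative and continuous on $[0,\infty)$ and satisfy $f(x)g(x)=x^{1-t}\cdot x^t=x$, so they are admissible in Theorem \ref{Thm_1}.

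Next I would perform the substitution and translate the $(f,g)$-Aluthge transform to the usual $t$-Aluthge transform. Writing $T=U|T|$ for the polar decomposition, we have $f(|T|)=|T|^{1-t}$ and $g(|T|)=|T|^{t}$, hence
\begin{align*}
\widetilde{T}_{f,g}=f(|T|)\,U\,g(|T|)=|T|^{1-t}U|T|^{t}=\widetilde{T}_{1-t},
\end{align*}
while
\begin{align*}
f^2(|T|)+g^2(|T|)=|T|^{2(1-t)}+|T|^{2t}.
\end{align*}
Plugging these into the conclusion of Theorem \ref{Thm_1} gives, for each fixed $t\in[0,1]$,
\begin{align*}
w_p(T)\leq 2^{\frac{1}{p}-1}w_p(\widetilde{T}_{1-t})+2^{\frac{1}{p}-2}\bigl\||T|^{2(1-t)}+|T|^{2t}\bigr\|_p.
\end{align*}
Since this inequality holds for every $t\in[0,1]$, one may take the infimum over $t$ on the right-hand side; as $t$ ranges over $[0,1]$ so does $1-t$, which yields the stated formulation with $\min_{0\leq t\leq 1}w_p(\widetilde{T}_{1-t})$.

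There is no real obstacle here, as the argument is a straightforward specialization. The only mild point to be careful about is the bookkeeping for the index of the $t$-Aluthge transform, namely that $f(|T|)U g(|T|)=|T|^{1-t}U|T|^{t}$ corresponds to $\widetilde{T}_{1-t}$ rather than $\widetilde{T}_{t}$ under the convention $\widetilde{T}_s=|T|^{s}U|T|^{1-s}$ introduced earlier in the excerpt.
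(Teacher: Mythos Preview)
Your proposal is correct and is exactly the approach the paper takes: the paper simply declares the corollary to be the special case of Theorem \ref{Thm_1} obtained by choosing $f(x)=x^{1-t}$, $g(x)=x^{t}$, without spelling out the substitution, so your write-up in fact supplies more detail (in particular the index bookkeeping $\widetilde{T}_{f,g}=\widetilde{T}_{1-t}$) than the paper does.
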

For $p=\infty$, we obtain the following inequalities.
\begin{remark}
    \begin{enumerate}
        \item [(i)] Let $T\in \mathfrak{L}(\mathcal{H})$  and for $p=\infty$ to the Theorem \ref{Thm_1}, we have the following inequality, , which is already proved by \cite[Corollary 2.4]{KShebrawiMBakherad2018}.
       \begin{align*}
        w(T)\leq \frac{1}{2}w(\widetilde{T}_{f, g})+\frac{1}{4}\|f^2(|T|)+g^2(|T|)\|.
    \end{align*}
    \item [(ii)] Let $T\in \mathfrak{L}(\mathcal{H})$  and for $p=\infty$ to the Corollary  \ref{Cor_1}, we have the following inequality (see \cite[Corollary 2.4]{Frakis_Kittaneh_Soltani_LAMA_2025}).
    \begin{align*}
        w(T)\leq \frac{1}{2}\min_{0\leq t\leq 1}w(\widetilde{T}_{1-t})+\frac{1}{4}\||T|^{2(1-t)}+|T|^{2t}\|.
    \end{align*}
    \end{enumerate}
\end{remark}
Similarly, for $f(x)=x^{1-t}$ , $g(x)=x^t$, $t\in [0, 1]$, we have the following inequalities.

\begin{align*}
        w_p(T)\leq 2^{\frac{1}{p}-1}\min_{0\leq t\leq 1}w_p(\widetilde{T}_{t})+2^{\frac{1}{p}-2}\||T|^{2t}+|T|^{2(1-t)}\|_p.
    \end{align*}
As some special cases to our results, we have some remarks, already established in \cite{Frakis_Kittaneh_Soltani_LAMA_2025}. 
\begin{remark}\label{Remark_2}
    \begin{enumerate}
        \item [(i)] Let $T\in\mathfrak{C}_p(\mathcal{H})$, let $1\leq p\leq \infty$ and for $t=\frac{1}{2}$. Then
        \begin{align*}
        w_p(T)\leq 2^{\frac{1}{p}-1} \left(w_p(\widetilde{T})+\|T\|_p\right).
        \end{align*}
         \item[(ii)] Using Lemma \ref{Lemma_2}, we have the following equality.
    For $T\in\mathfrak{C}_p(\mathcal{H})$, let $2\leq p\leq \infty$. If $\widetilde{T}=0$, then
    $$w_p(T)=2^{\frac{1}{p}-1}\|T\|_p.$$
    \end{enumerate}
    \end{remark}
    Here are some special cases to our results, which is already established in \cite{TYamazaki2007}.
    \begin{remark}
        For $p=\infty$ and $p=2$ in Remark \ref{Remark_2}, we obtain
        \begin{enumerate}
        \item [(i)] Let $T\in\mathfrak{L}(\mathcal{H})$. Then
        \begin{align*}
        w(T)\leq \frac{1}{2} \left(w(\widetilde{T})+\|T\|\right).
        \end{align*}
         \item[(ii)] 
    $$w(T)=\frac{\|T\|}{2}.$$
     \item[(iii)] $T\in\mathfrak{C}_2(\mathcal{H})$, then 
     \begin{align*}
        w_2(T)\leq \frac{1}{\sqrt{2}} \left(w_2(\widetilde{T})+\|T\|_2\right).
        \end{align*}
    \end{enumerate}
    \end{remark}

\begin{theorem}\label{Thm_2}
    Let $T\in\mathfrak{C}_p(\mathcal{H})$, let $2\leq p\leq \infty$. Then
    \begin{align*}
        w_p^2(T)\leq \frac{1}{4}\left(\|g(|T|)\widetilde{T}_{f, g}f(|T|)\|_{\frac{p}{2}}+\|f(|T|)(\widetilde{T}_{f, g})^*g(|T|)\|_{\frac{p}{2}}+\|T^*T+TT^*\|_{\frac{p}{2}}\right).
    \end{align*}
\end{theorem}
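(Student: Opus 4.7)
The plan is to reduce $w_p^2(T)$ to the Schatten $(p/2)$-norm of a self-adjoint square, expand that square, and then identify the Aluthge-transform quantities appearing on the right-hand side. Since $2\Re(e^{i\theta}T) = e^{i\theta}T + e^{-i\theta}T^*$ is self-adjoint, $(2\Re(e^{i\theta}T))^2$ is positive and equals $|2\Re(e^{i\theta}T)|^2$, which yields the identity
\begin{align*}
4\|\Re(e^{i\theta}T)\|_p^2 \;=\; \bigl\|(e^{i\theta}T + e^{-i\theta}T^*)^2\bigr\|_{p/2}.
\end{align*}
This is the step that forces the hypothesis $p \geq 2$, since $p/2 \geq 1$ is needed both for $\|\cdot\|_{p/2}$ to be a genuine norm and for this identity to make sense.

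Next, expanding
\begin{align*}
(e^{i\theta}T + e^{-i\theta}T^*)^2 \;=\; e^{2i\theta}T^2 + e^{-2i\theta}(T^*)^2 + T^*T + TT^*,
\end{align*}
and applying the triangle inequality in $\mathfrak{C}_{p/2}(\mathcal{H})$ (valid since $p/2 \ge 1$), the left-hand side is dominated by $\|T^2\|_{p/2} + \|(T^*)^2\|_{p/2} + \|T^*T + TT^*\|_{p/2}$. Writing $T = U|T|$ and using $f(x)g(x)=x$ together with the commutativity of continuous functions of $|T|$, one checks the key identity
\begin{align*}
g(|T|)\widetilde{T}_{f,g}f(|T|) \;=\; g(|T|)f(|T|)Ug(|T|)f(|T|) \;=\; |T|U|T|,
\end{align*}
and analogously $f(|T|)(\widetilde{T}_{f,g})^*g(|T|) = |T|U^*|T|$. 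Consequently $T^2 = U\cdot|T|U|T| = U\bigl(g(|T|)\widetilde{T}_{f,g}f(|T|)\bigr)$ and $(T^*)^2 = \bigl(f(|T|)(\widetilde{T}_{f,g})^*g(|T|)\bigr)U^*$, so applying \eqref{Eq_2.2} together with $\|U\| \leq 1$ gives
\begin{align*}
\|T^2\|_{p/2} \le \|g(|T|)\widetilde{T}_{f,g}f(|T|)\|_{p/2}, \qquad \|(T^*)^2\|_{p/2} \le \|f(|T|)(\widetilde{T}_{f,g})^*g(|T|)\|_{p/2}.
\end{align*}

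Combining the three estimates, dividing by $4$, and taking the supremum over $\theta \in \mathbb{R}$ (the right-hand side is independent of $\theta$) yields the claimed bound. The only real obstacle is spotting the algebraic identity $g(|T|)\widetilde{T}_{f,g}f(|T|) = |T|U|T|$, which is what lets the partial isometry $U$ be absorbed for free via \eqref{Eq_2.2}; after that, every remaining step is a routine application of the triangle inequality in $\mathfrak{C}_{p/2}$ and basic functional calculus for $|T|$.
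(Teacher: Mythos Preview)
Your proof is correct and follows essentially the same route as the paper's own argument: both square $\Re(e^{i\theta}T)$, pass to the $\frac{p}{2}$-norm via $\|A\|_p^2=\|A^2\|_{p/2}$ for self-adjoint $A$, expand the square, identify $T^2=U\bigl(g(|T|)\widetilde{T}_{f,g}f(|T|)\bigr)$ and $(T^*)^2=\bigl(f(|T|)(\widetilde{T}_{f,g})^*g(|T|)\bigr)U^*$, and then absorb the partial isometry $U$ using \eqref{Eq_2.2}. The only cosmetic difference is that you state the intermediate identity $g(|T|)\widetilde{T}_{f,g}f(|T|)=|T|U|T|$ explicitly, whereas the paper writes the factorization of $T^2$ directly.
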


\begin{proof}
    Let $T=U|T|$ be the polar decomposition of $T$. Then for any $\theta\in \mathbb{R}$, we have
    \begin{align*}
        \left(\Re(e^{i\theta} T)\right)^2&=\left(\frac{e^{i\theta}T+e^{-i\theta}T^*}{2}\right)^2\\
        &=\frac{1}{4}\left(e^{2i\theta}T^2+e^{-2i\theta}(T^*)^2+T^*T+TT^*\right)\\
        &=\frac{1}{2}\left(e^{2i\theta}U|T|U|T|+e^{-2i\theta}|T|U^*|T|U^*+T^*T+TT^*\right)\\
        &=\frac{1}{4}\left(e^{2i\theta}Ug(|T|)f(|T|)Ug(|T|)f(|T|+e^{-2i\theta}f(|T|)g(|T|)U^*f(|T|)g(|T|)U^*+T^*T+TT^*\right)\\
         &=\frac{1}{4}\left(e^{2i\theta}Ug(|T|)\widetilde{T}_{f,g}f(|T|+e^{-2i\theta}f(|T|)(\widetilde{T}_{f,g})^*g(|T|)U^*+T^*T+TT^*\right).
    \end{align*}
    Thus,
    \begin{align*}
        \|(\Re(e^{i\theta} T)^2\|_{\frac{p}{2}}&\leq \frac{1}{4}\left(\|Ug(|T|)\widetilde{T}_{f,g}f(|T|\|_{\frac{p}{2}}+\|f(|T|)(\widetilde{T}_{f,g})^*g(|T|)U^*\|_{\frac{p}{2}}+\|T^*T+TT^*\|_{\frac{p}{2}}\right)\\
        &\leq \frac{1}{4}\left(\|g(|T|)\widetilde{T}_{f,g}f(|T|\|_{\frac{p}{2}}+\|f(|T|)(\widetilde{T}_{f,g})^*g(|T|)\|_{\frac{p}{2}}+\|T^*T+TT^*\|_{\frac{p}{2}}\right)~~(\mbox{by~\eqref{Eq_2.2}}).
    \end{align*}
    By taking the supremum in the above inequality over $\theta\in \mathbb{R}$, we get
     \begin{align*}
        w_p^2(T)\leq \frac{1}{4}\left(\|g(|T|)\widetilde{T}_{f, g}f(|T|)\|_{\frac{p}{2}}   +\|f(|T|)(\widetilde{T}_{f, g})^*g(|T|)\|_{\frac{p}{2}}+\|T^*T+TT^*\|_{\frac{p}{2}}\right).
    \end{align*}
\end{proof}
In \cite[Theorem 2]{Kittaneh_LMS_1993}, it was shown that for $A, B$ are positive operators in $\mathfrak{L}(\mathcal{H})$ and $X\in \mathcal{B(H)}$,
		the inequality
		\begin{align}\label{Heinz_ineq_0}
			\vertiii{A^\nu XB^{1-\nu}}\leq \vertiii{AX}^\nu\vertiii{XB}^{1-\nu},
		\end{align}
		holds for every $\nu\in [0, 1]$.
  
\begin{remark}
    By letting $g(x)=x^{1-t}, f(x)=x^t$, $0\leq t\leq 1$, to the above theorem, we have the following inequality, which is already established recently in \cite[Theorem 2.10]{Frakis_Kittaneh_Soltani_LAMA_2025}.
    \begin{enumerate}
        \item [(i)]  \begin{align*}
        w_p^2(T)&\leq \frac{1}{4}\left(\||T|^{1-t}\widetilde{T}_{t}|T|^t\|_{\frac{p}{2}}+\||T|^t(\widetilde{T}_{t})^*|T|^{1-t}\|_{\frac{p}{2}}+\|T^*T+TT^*\|_{\frac{p}{2}}\right)\\
        &\leq \frac{1}{4}\left(\||T|\widetilde{T}_{t}\|_{\frac{p}{2}}^{1-t}\|\widetilde{T}_{t}|T|\|_{\frac{p}{2}}^t+\||T|(\widetilde{T}_{t})^*\|_{\frac{p}{2}}^t\|(\widetilde{T}_{t})^*|T|\|_{\frac{p}{2}}^{1-t}+\|T^*T+TT^*\|_{\frac{p}{2}}\right)~(\mbox{by}~ \eqref{Heinz_ineq_0})\\
        &\leq \frac{1}{2}\|T\|_{\frac{p}{2}}\min_{0\leq t\leq 1}\|\widetilde{T}_{t}\|
        +\frac{1}{4}\|T^*T+TT^*\|_{\frac{p}{2}}, 
    \end{align*}
    for $2\leq p\leq \infty$.
    \item [(ii)] If we set $\widetilde{T}_{f,g}=0$ in Theorem \ref{Thm_2}. Then we obtain an inequality already established in \cite[Remark 2.11]{Frakis_Kittaneh_Soltani_LAMA_2025}.
    $$w_p^2(T)\leq \frac{1}{4}\|T^*T+TT^*\|_{\frac{p}{2}}.$$
    \end{enumerate}
    By using \cite[(18)]{Benmakhlouf_Hirzallah_Kittaneh_LAMA_2021}, we get  
    $$w_p^2(T)= \frac{1}{4}\|T^*T+TT^*\|_{\frac{p}{2}}.$$
   \end{remark}
   The following lemma is essential for our analysis in order to obtain the immediate corollary.
\begin{lemma}\cite{Kittaneh_JFA_1997}\label{Lemma_2.3}
    Let $T, S\in\mathfrak{C}_p(\mathcal{H})$ be positive, and let $p\geq 1$. 
    Then
    \begin{align}
        \|T+S\|_p\leq \left(\|T\|_p^p+\|S\|_p^p\right)^{\frac{1}{p}}+2^{\frac{1}{p}}\|T^{1/2}S^{1/2}\|_p.
    \end{align}
\end{lemma}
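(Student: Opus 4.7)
The plan is to reduce the inequality to a $2\times 2$ block-matrix computation, which is the standard device for turning a sum of positive operators into a product.

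First I would introduce the block operator $A=\begin{bmatrix} T^{1/2} & S^{1/2} \\ 0 & 0 \end{bmatrix}$ acting on $\mathcal{H}\oplus\mathcal{H}$. A direct multiplication gives $AA^{*}=\begin{bmatrix} T+S & 0 \\ 0 & 0 \end{bmatrix}$ and $A^{*}A=\begin{bmatrix} T & T^{1/2}S^{1/2} \\ S^{1/2}T^{1/2} & S \end{bmatrix}$. Since $AA^{*}$ and $A^{*}A$ share the same nonzero singular values (both equal to $s_{j}(A)^{2}$), their Schatten $p$-norms agree, and from the first block I read off $\|AA^{*}\|_{p}=\|T+S\|_{p}$. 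Thus the problem becomes that of bounding the Schatten $p$-norm of the second block matrix.

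Next I would split the off-diagonal from the diagonal part and apply the triangle inequality for $\|\cdot\|_{p}$:
\begin{align*}
\|T+S\|_{p}\leq \left\|\begin{bmatrix} T & 0 \\ 0 & S \end{bmatrix}\right\|_{p}+\left\|\begin{bmatrix} 0 & T^{1/2}S^{1/2} \\ S^{1/2}T^{1/2} & 0 \end{bmatrix}\right\|_{p}.
\end{align*}
The diagonal piece is handled by the first identity in \eqref{Eq_1.2}, giving $(\|T\|_{p}^{p}+\|S\|_{p}^{p})^{1/p}$. For the off-diagonal piece I would invoke a direct singular-value count (or equivalently observe that its square is block-diagonal with entries $T^{1/2}S T^{1/2}$ and $S^{1/2} T S^{1/2}$, which have the same nonzero spectrum as $T^{1/2}S^{1/2}(T^{1/2}S^{1/2})^{*}$ and $(T^{1/2}S^{1/2})^{*}T^{1/2}S^{1/2}$); this yields the norm $\bigl(\|T^{1/2}S^{1/2}\|_{p}^{p}+\|S^{1/2}T^{1/2}\|_{p}^{p}\bigr)^{1/p}$. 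Because $S^{1/2}T^{1/2}$ is the adjoint of $T^{1/2}S^{1/2}$ their $p$-norms coincide, producing the factor $2^{1/p}\|T^{1/2}S^{1/2}\|_{p}$.

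The only delicate point, and what I expect to require the most care, is the identity $\|AA^{*}\|_{p}=\|A^{*}A\|_{p}$: one must make sure $A$ belongs to $\mathfrak{C}_{p}(\mathcal{H}\oplus\mathcal{H})$ (which follows from $T,S\in\mathfrak{C}_{p}$ since $T^{1/2},S^{1/2}\in\mathfrak{C}_{2p}$) and that the singular-value equality transfers cleanly to the Schatten norm. Once that is secured, combining the two displayed bounds and recalling that the off-diagonal extended matrix is handled by the second identity in \eqref{Eq_1.2} finishes the proof. For $p=\infty$ the bound collapses to the Kittaneh inequality \eqref{Ineq_Kittaneh_JFA_1997}, which is a useful sanity check.
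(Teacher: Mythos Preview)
The paper does not supply its own proof of this lemma; it is quoted verbatim from \cite{Kittaneh_JFA_1997} and used as a black box in the proof of Corollary~\ref{Cor_2.2}. Your argument is correct and is in fact the same block-matrix device used in Kittaneh's original proof: form $A=\begin{bmatrix} T^{1/2} & S^{1/2}\\ 0 & 0\end{bmatrix}$, use $\|AA^{*}\|_{p}=\|A^{*}A\|_{p}$, and split $A^{*}A$ into its diagonal and off-diagonal parts via the triangle inequality together with \eqref{Eq_1.2}. So there is nothing to compare against in the present paper, but your write-up reproduces the intended proof faithfully.
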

\begin{corollary}\label{Cor_2.2}
     Let $T\in\mathfrak{C}_p(\mathcal{H}), 2\leq p\leq \infty$. Then
     \begin{align*}
         w_p^2(T)\leq \frac{1}{4}\left(\|g(|T|)\widetilde{T}_{f, g}f(|T|)\|_{\frac{p}{2}}+\|f(|T|)(\widetilde{T}_{f, g})^*g(|T|)\|_{\frac{p}{2}}\right)+2^{\frac{2}{p}-2}(\|T\|_p^2+\|T^2\|_{\frac{p}{2}}).
     \end{align*}
\end{corollary}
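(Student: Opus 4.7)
The plan is to start from the conclusion of Theorem \ref{Thm_2} and refine only the last summand $\|T^*T+TT^*\|_{p/2}$, leaving the two Aluthge-type terms untouched. The tool for this refinement is Lemma \ref{Lemma_2.3}, which applies to the pair of positive operators $T^*T$ and $TT^*$ at the norm index $p/2$; since we are assuming $2\leq p\leq\infty$, the hypothesis $p/2\geq 1$ needed for that lemma is satisfied (with the usual interpretation of the operator-norm case $p=\infty$, where $2^{2/p}$ reduces to $1$).

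First I would invoke Lemma \ref{Lemma_2.3} to write
\[
\|T^*T+TT^*\|_{p/2}\le\bigl(\|T^*T\|_{p/2}^{p/2}+\|TT^*\|_{p/2}^{p/2}\bigr)^{2/p}+2^{2/p}\bigl\|(T^*T)^{1/2}(TT^*)^{1/2}\bigr\|_{p/2}.
\]
Using the identity $\||T|^{r}\|_{p}=\|T\|_{rp}^{r}$ recorded in the introduction, we have $\|T^*T\|_{p/2}=\||T|^{2}\|_{p/2}=\|T\|_p^{2}$ and similarly $\|TT^*\|_{p/2}=\|T^*\|_p^{2}=\|T\|_p^{2}$, so the first summand collapses to $2^{2/p}\|T\|_p^{2}$. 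Since $(T^*T)^{1/2}=|T|$ and $(TT^*)^{1/2}=|T^*|$, the cross term is $\||T||T^*|\|_{p/2}$.

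The main obstacle, and the only nonroutine step, is the identity $\||T||T^*|\|_{p/2}=\|T^{2}\|_{p/2}$. I would handle it through singular values: for any operators $A,B$, the products $AB$ and $BA$ share the same nonzero spectrum, hence
\[
\lambda\bigl(|T^*||T|^{2}|T^*|\bigr)=\lambda\bigl(|T|^{2}|T^*|^{2}\bigr)=\lambda\bigl(TT^*\cdot T^*T\bigr)=\lambda\bigl(T^*(T^*T)T\bigr),
\]
and taking square roots gives $s_j(|T||T^*|)=s_j(T^{2})$ for every $j$; equality of the Schatten $p/2$-norms then follows. (Alternatively, one extends the partial isometry from the polar decomposition $T=U|T|$ to a genuine unitary and uses $|T^*|=U|T|U^*$ together with unitary invariance of the norm to conjugate $|T^*||T|^{2}|T^*|$ onto $T^{*2}T^{2}$.)

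Combining these ingredients yields
\[
\|T^*T+TT^*\|_{p/2}\le 2^{2/p}\|T\|_p^{2}+2^{2/p}\|T^{2}\|_{p/2},
\]
and substituting this estimate into the bound of Theorem \ref{Thm_2}, while absorbing the factor $\tfrac14\cdot 2^{2/p}=2^{2/p-2}$, delivers precisely the claimed inequality. No further computation is needed.
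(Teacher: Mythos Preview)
Your proof is correct and follows essentially the same route as the paper: invoke Theorem~\ref{Thm_2}, then apply Lemma~\ref{Lemma_2.3} to the positive pair $T^*T,\,TT^*$ at index $p/2$, and simplify using $\|T^*T\|_{p/2}=\|TT^*\|_{p/2}=\|T\|_p^2$ together with $\||T||T^*|\|_{p/2}=\|T^2\|_{p/2}$. If anything, you are more careful than the paper, which asserts the last identity without comment; your singular-value justification via $\lambda(AB)=\lambda(BA)$ is a welcome addition.
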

\begin{proof}
    Using Theorem \ref{Thm_2}, and Lemma\ref{Lemma_2.3}, we obtain
    \begin{align*}
        w_p^2(T)&\leq \frac{1}{4}\left(\|g(|T|)\widetilde{T}_{f, g}f(|T|)\|_{\frac{p}{2}}+\|f(|T|)(\widetilde{T}_{f, g})^*g(|T|)\|_{\frac{p}{2}}+\|T^*T+TT^*\|_{\frac{p}{2}}\right)\\
        & \leq \frac{1}{4}\left(\|g(|T|)\widetilde{T}_{f, g}f(|T|)\|_{\frac{p}{2}}+\|f(|T|)(\widetilde{T}_{f, g})^*g(|T|)\|_{\frac{p}{2}}\right)+\frac{1}{4}\left\{\left(\||T|^2\|_{\frac{p}{2}}^{\frac{p}{2}}+\||T^*|^2\|_{\frac{p}{2}}^{\frac{p}{2}}\right)^{\frac{2}{p}}\right.\\
        &\left. \quad\hspace{9.0cm}+2^{2/p}\||T||T^*|\|_{\frac{p}{2}}\right\}\\
        &= \frac{1}{4}\left(\|g(|T|)\widetilde{T}_{f, g}f(|T|)\|_{\frac{p}{2}}+\|f(|T|)(\widetilde{T}_{f, g})^*g(|T|)\|_{\frac{p}{2}}\right)+2^{\frac{2}{p}-2}\left(\|T\|_p^2+\|T^2\|_{\frac{p}{2}}^2\right).
    \end{align*}
\end{proof}

\begin{remark}
    If $f(x)=x^t, g(x)=x^{1-t}, 0\leq t\leq 1$, to the Corollary \ref{Cor_2.2}, we obtain
    \begin{align*}
       w_p^2(T)\leq   \frac{1}{2}\|T\|_{\frac{p}{2}}\min_{0\leq t\leq 1}\|\widetilde{T_t}\|+2^{\frac{2}{p}-2}\left(\|T\|_p^2+\|T^2\|_{\frac{p}{2}}\right),
    \end{align*}
for every $T\in\mathfrak{C}_p(\mathcal{H}), 2\leq p\leq \infty$,    which is already in \cite[Corollary 2.13]{Frakis_Kittaneh_Soltani_LAMA_2025}. 
\end{remark}

\begin{theorem}\label{Thm3}
 Let $T, S\in \mathfrak{C}_p(\mathcal{H})$, and let $1\leq p\leq \infty$. Then
 \begin{align*}
     w_p\left(\begin{bmatrix}
         0 &T\\
         S & 0
     \end{bmatrix}\right)&\leq 2^{\frac{2}{p}-2}\bigg(\|f(|S|)g(|T^*|)\|_p+\|f(|T|)g(|S^*|)\|_p\bigg)\\
        & +2^{\frac{1}{p}-2}\bigg(\|f^2(|S|)+g^2(|S|)\|_p^p+\|f^2(|T|)+g^2(|T|)\|_p^p\bigg)^{\frac{1}{p}}.
 \end{align*}
\end{theorem}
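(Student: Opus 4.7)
The approach is to reduce Theorem~\ref{Thm3} to Theorem~\ref{Thm_1} by applying the latter to the off-diagonal operator matrix $M := \begin{bmatrix} 0 & T \\ S & 0 \end{bmatrix} \in \mathfrak{C}_p(\mathcal{H}\oplus\mathcal{H})$. Writing the polar decompositions $T=U|T|$ and $S=V|S|$, one checks directly that $|M|=\mathrm{diag}(|S|,|T|)$ and that $M=W|M|$ with $W=\begin{bmatrix} 0 & U \\ V & 0 \end{bmatrix}$ is the polar decomposition of $M$. Functional calculus on the block-diagonal operator $|M|$ then gives
\[
\widetilde{M}_{f,g} = f(|M|)\,W\,g(|M|) = \begin{bmatrix} 0 & f(|S|)Ug(|T|) \\ f(|T|)Vg(|S|) & 0 \end{bmatrix},
\]
and $f^2(|M|)+g^2(|M|)$ is block-diagonal with blocks $f^2(|S|)+g^2(|S|)$ and $f^2(|T|)+g^2(|T|)$.

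Theorem~\ref{Thm_1} applied to $M$ yields
\[
w_p(M)\le 2^{\frac{1}{p}-1}w_p(\widetilde{M}_{f,g}) + 2^{\frac{1}{p}-2}\|f^2(|M|)+g^2(|M|)\|_p.
\]
The identity \eqref{Eq_1.2} converts the second summand into $2^{\frac{1}{p}-2}\bigl(\|f^2(|S|)+g^2(|S|)\|_p^p+\|f^2(|T|)+g^2(|T|)\|_p^p\bigr)^{1/p}$, which is precisely the tail of the desired bound. It remains to estimate $w_p(\widetilde{M}_{f,g})$.

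For any $\theta\in\mathbb{R}$, $\Re(e^{i\theta}\widetilde{M}_{f,g})$ is again off-diagonal, so by \eqref{Eq_1.2} its $p$-norm is the $p$-sum of the norms of the two off-diagonal blocks, each of the form $e^{i\theta}f(|S|)Ug(|T|)+e^{-i\theta}g(|S|)V^*f(|T|)$ (and its adjoint-transposed counterpart). I would then invoke the functional-calculus intertwiners $Uh(|T|)=h(|T^*|)U$ and $h(|S|)V^*=V^*h(|S^*|)$ (consequences of $U|T|^2U^*=|T^*|^2$ and its $V,S$-analogue), together with $\|U\|,\|V\|\le 1$ and \eqref{Eq_2.2}, to obtain
\[
\|f(|S|)Ug(|T|)\|_p\le \|f(|S|)g(|T^*|)\|_p,\quad \|g(|S|)V^*f(|T|)\|_p\le \|f(|T|)g(|S^*|)\|_p,
\]
and the symmetric bounds for the other block. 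The triangle inequality then gives that both block norms are bounded by $C:=\|f(|S|)g(|T^*|)\|_p+\|f(|T|)g(|S^*|)\|_p$, so $\|\Re(e^{i\theta}\widetilde{M}_{f,g})\|_p\le \tfrac{1}{2}(C^p+C^p)^{1/p}=2^{\frac{1}{p}-1}C$. Taking the supremum in $\theta$ and multiplying by $2^{\frac{1}{p}-1}$ recovers the remaining term $2^{\frac{2}{p}-2}C$. The main technical obstacle is the careful bookkeeping of partial isometries in the intertwining relations (as opposed to unitaries), which is routine but needs to be stated precisely; the case $p=\infty$ requires replacing each $p$-sum by the corresponding maximum, as dictated by \eqref{Eq_1.2}.
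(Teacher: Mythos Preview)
Your proposal is correct and follows essentially the same route as the paper: apply Theorem~\ref{Thm_1} to $M=\begin{bmatrix}0&T\\S&0\end{bmatrix}$, compute $\widetilde{M}_{f,g}$ and $f^2(|M|)+g^2(|M|)$ blockwise, use \eqref{Eq_1.2} for the second summand, and control $w_p(\widetilde{M}_{f,g})$ via the intertwining $Uh(|T|)=h(|T^*|)U$ (and its $V,S$ analogue) together with $\|U\|,\|V\|\le 1$. The only cosmetic difference is that the paper bounds $w_p(\widetilde{M}_{f,g})$ by writing it as a sum of two strictly triangular blocks and invoking the known identity $w_p\!\left(\begin{smallmatrix}0&A\\0&0\end{smallmatrix}\right)=2^{1/p-1}\|A\|_p$, whereas you estimate $\|\Re(e^{i\theta}\widetilde{M}_{f,g})\|_p$ directly from its off-diagonal structure; both yield the same bound $2^{1/p-1}\bigl(\|f(|S|)g(|T^*|)\|_p+\|f(|T|)g(|S^*|)\|_p\bigr)$.
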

\begin{proof}
    Let $T=U|T|$ and $S=V|S|$ be the polar decomposition of $T$ and $S$ respectively and let $\mathbb{T}=\begin{bmatrix}
        0 & T\\
        S & 0
    \end{bmatrix}$. Now from the polar decomposition of $\mathbb{T}=\begin{bmatrix}
        0 & U\\
        V & 0
    \end{bmatrix}\begin{bmatrix}
        |S| & 0\\
        0 & |T|
    \end{bmatrix}$ that 
    \begin{align*}        \widetilde{T_{f,g}}&=f(|T|)\begin{bmatrix}
        0 & U\\
        V & 0
    \end{bmatrix}g(|T|)\\
    &=\begin{bmatrix}
        f(|S|) & 0\\
        0 & f(|T|)
    \end{bmatrix}\begin{bmatrix}
        0 & U\\
        V & 0
    \end{bmatrix}\begin{bmatrix}
        g(|S|) & 0\\
        0 & g(|T|)
    \end{bmatrix} \\
    &=\begin{bmatrix}
        0 & f(|S|)Ug(|T|)\\
        f(|T|)Vg(|S|) & 0
    \end{bmatrix}
    \end{align*}
    Now using the fact $|T^*|^2=TT^*=U|T|^2U^*$, $|S^*|^2=SS^*=V|S|^2V^*$, so we have $g(|T|)=U^*g(|T^*|)U$, $g(|S|)=V^*g(|S^*|)V$ for every non-negative continuous function $g$ on $[0, \infty)$.
    So, by using a unitary operator $\mathbb{W}=\begin{bmatrix}
        0 & I\\
        I & 0
    \end{bmatrix}$, we obtain
    \begin{align}\label{Eq_2.4}
     w_p(\widetilde{T_{f,g}})&=   w_p\left(\begin{bmatrix}
        0 & f(|S|)Ug(|T|)\\
        f(|T|)Vg(|S|) & 0
    \end{bmatrix}\right)\nonumber\\
    &\leq  w_p\left(\begin{bmatrix}
        0 & f(|S|)Ug(|T|)\\
        0& 0
    \end{bmatrix}\right)+ w_p\left(\begin{bmatrix}
        0 & 0\\
        f(|T|)Vg(|S|) & 0
    \end{bmatrix}\right)\nonumber\\
    &=  w_p\left(\begin{bmatrix}
        0 & f(|S|)Ug(|T|)\\
        0& 0
    \end{bmatrix}\right)+ w_p\left(\mathbb{W}^*\begin{bmatrix}
        0 & f(|T|)Vg(|S|)\\
        0 & 0    \end{bmatrix}\mathbb{W}\right)\nonumber\\
        &=  w_p\left(\begin{bmatrix}
        0 & f(|S|)Ug(|T|)\\
        0& 0
    \end{bmatrix}\right)+ w_p\left(\begin{bmatrix}
        0 & f(|T|)Vg(|S|)\\
        0 & 0    \end{bmatrix}\right)\nonumber\\
        &=2^{\frac{1}{p}-1}\|f(|S|)Ug(|T|)\|_p+2^{\frac{1}{p}-1}\|f(|T|)Vg(|S|)\|_p~~~~~~\mbox{(by~\cite[Proposition 4.3]{Benmakhlouf_Hirzallah_Kittaneh_LAMA_2021})}\nonumber\\
      &=2^{\frac{1}{p}-1}\|f(|S|)UU^*g(|T^*|)U\|_p+2^{\frac{1}{p}-1}\|f(|T|)VV^*g(|S^*|)V\|_p\nonumber\\
      & \leq 2^{\frac{1}{p}-1}\bigg(\|f(|S|)g(|T^*|)\|_p+\|f(|T|)g(|S^*|)\|_p\bigg).
    \end{align}
    Now, using Theorem \ref{Thm_1} and \eqref{Eq_2.4}, we obtain
   \begin{align*}
        w_p(\mathbb{T})&\leq 2^{\frac{1}{p}-1}w_p(\widetilde{\mathbb{T}}_{f, g})+2^{\frac{1}{p}-2}\|f^2(|\mathbb{T}|)+g^2(|\mathbb{T}|)\|_p\\
        &=2^{\frac{1}{p}-1}\left(2^{\frac{1}{p}-1}\bigg(\|f(|S|)g(|T^*|)\|_p+\|f(|T|)g(|S^*|)\|_p\bigg)\right)\\
        & +2^{\frac{1}{p}-2}\left\|\begin{bmatrix}
            f^2(|S|)+g^2(|S|) & 0\\
            0 & f^2(|T|)+g^2(|T|)
        \end{bmatrix}\right\|_p\\
        &=2^{\frac{2}{p}-2}\bigg(\|f(|S|)g(|T^*|)\|_p+\|f(|T|)g(|S^*|)\|_p\bigg)\\
        & +2^{\frac{1}{p}-2}\bigg(\|f^2(|S|)+g^2(|S|)\|_p^p+\|f^2(|T|)+g^2(|T|)\|_p^p\bigg)^{\frac{1}{p}} ~\mbox{by \eqref{Eq_1.2}}.
    \end{align*}
\end{proof}

\begin{remark}
    If $f(x)=x^{1-t}$, $g(x)=x^t$ in Theorem \ref{Thm3}, then
    \begin{align*}
     w_p\left(\begin{bmatrix}
         0 &T\\
         S & 0
     \end{bmatrix}\right)&\leq 2^{\frac{2}{p}-2}\bigg(\||S|^{1-t}|T^*|^{t}\|_p+\||T|^{1-t}|S^*|^t\|_p\bigg)\\
        & +2^{\frac{1}{p}-2}\bigg(\||S|^{2(1-t)}+|S|^{2t}\|_p^p+\||T|^{2(1-t)}+|T|^{2t}\|_p^p\bigg)^{\frac{1}{p}}.
 \end{align*}
    For $t=\frac{1}{2}$
    \begin{align*}
     w_p\left(\begin{bmatrix}
         0 &T\\
         S & 0
     \end{bmatrix}\right)&\leq 2^{\frac{2}{p}-2}\bigg(\||S|^{\frac{1}{2}}|T^*|^{\frac{1}{2}}\|_p+\||T|^{\frac{1}{2}}|S^*|^\frac{1}{2}\|_p\bigg) +2^{\frac{1}{p}-2}\bigg(\||S|+|S|\|_p+\||T|+|T|\|_p\bigg)^{\frac{1}{p}}\\
        & =2^{\frac{2}{p}-2}\bigg(\||S|^{\frac{1}{2}}|T^*|^{\frac{1}{2}}\|_p+\||T|^{\frac{1}{2}}|S^*|^\frac{1}{2}\|_p\bigg)
         +2^{\frac{1}{p}-2}\bigg(2^{\frac{1}{p}}\bigg(\|S\|_p+\|T\|_p\bigg)^{\frac{1}{p}}\bigg)\\
         & =2^{\frac{2}{p}-2}\bigg(\||S|^{\frac{1}{2}}|T^*|^{\frac{1}{2}}\|_p+\||T|^{\frac{1}{2}}|S^*|^\frac{1}{2}\|_p         +\bigg(\|S\|_p^p+\|T\|_p^p\bigg)^{\frac{1}{p}}\bigg).
 \end{align*}
 \end{remark}

\begin{remark}
    For $p=\infty$ in Theorem \ref{Thm3}
    \begin{align}
         w_p\left(\begin{bmatrix}
         0 &T\\
         S & 0
     \end{bmatrix}\right) \leq \frac{1}{4} \bigg(\|f(|S|)g(|T^*|)\|+\|f(|T|)g(|S^*|)\|+\max\bigg\{\|f^2(|S|)+g^2(|S|)\|,\|f^2(|T|)+g^2(|T|)\|\bigg\}\bigg).
    \end{align}
\end{remark}

\begin{corollary}\label{Cor_2.3}
   Let $T, S\in \mathfrak{C}_p(\mathcal{H})$, and let $1\leq p<\infty$. Then
   \begin{align*}
       \|T+S^*\|_p &\leq 2^{\frac{1}{p}-1}\bigg(\|f(|S|)g(|T^*|)\|_p+\|f(|T|)g(|S^*|)\|_p\bigg)\\
        & +{\frac{1}{2}}\bigg(\|f^2(|S|)+g^2(|S|)\|_p^p+\|f^2(|T|)+g^2(|T|)\|_p^p\bigg)^{\frac{1}{p}}.
   \end{align*}
\end{corollary}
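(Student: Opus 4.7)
The plan is to deduce this corollary directly from Theorem \ref{Thm3} by relating the Schatten $p$-norm of $T+S^*$ to the $p$-numerical radius of the off-diagonal block matrix $\mathbb{T}=\begin{bmatrix} 0 & T \\ S & 0 \end{bmatrix}$.

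First, I would compute $2\Re(\mathbb{T}) = \mathbb{T}+\mathbb{T}^* = \begin{bmatrix} 0 & T+S^* \\ S+T^* & 0 \end{bmatrix}$. Applying the off-diagonal Schatten norm identity in \eqref{Eq_1.2}, and using $\|S+T^*\|_p = \|(T+S^*)^*\|_p = \|T+S^*\|_p$, gives
$$\|2\Re(\mathbb{T})\|_p = \bigl(\|T+S^*\|_p^p + \|S+T^*\|_p^p\bigr)^{1/p} = 2^{1/p}\|T+S^*\|_p,$$
so $\|\Re(\mathbb{T})\|_p = 2^{1/p-1}\|T+S^*\|_p$. This step is exactly where the hypothesis $p<\infty$ enters, since the off-diagonal block formula in \eqref{Eq_1.2} has a different form at $p=\infty$.

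Second, by the very definition of the $p$-numerical radius, $w_p(\mathbb{T}) \geq \|\Re(e^{i\theta}\mathbb{T})\|_p$ for every $\theta\in\mathbb{R}$; specializing $\theta=0$ yields $w_p(\mathbb{T}) \geq 2^{1/p-1}\|T+S^*\|_p$, hence $\|T+S^*\|_p \leq 2^{1-1/p}\, w_p(\mathbb{T})$.

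Finally, I would insert the bound on $w_p(\mathbb{T})$ supplied by Theorem \ref{Thm3} and multiply through by $2^{1-1/p}$. The coefficients collapse cleanly: $2^{1-1/p}\cdot 2^{2/p-2} = 2^{1/p-1}$ for the first term, and $2^{1-1/p}\cdot 2^{1/p-2} = \tfrac{1}{2}$ for the second term, producing exactly the stated inequality. I don't foresee any serious obstacle; the argument is a short bookkeeping combination of the matrix identity for $\Re(\mathbb{T})$, the block-norm formula \eqref{Eq_1.2}, and Theorem \ref{Thm3}. The only point to be careful with is tracking the powers of $2$ when multiplying out.
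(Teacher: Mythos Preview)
Your proposal is correct and follows essentially the same approach as the paper: both relate $\|T+S^*\|_p$ to $w_p(\mathbb{T})$ via the block-norm identity \eqref{Eq_1.2} applied to $\mathbb{T}+\mathbb{T}^*$, then invoke Theorem~\ref{Thm3}. Your explicit remark that the restriction $p<\infty$ is precisely what makes the formula in \eqref{Eq_1.2} applicable is a nice clarification not spelled out in the paper.
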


\begin{proof}
Let $\mathbb{T}=\begin{bmatrix}
        0 & T\\
        S & 0
    \end{bmatrix}$.
    By using \eqref{Eq_1.2}, we obtain
    \begin{align*}        2^{1/p}\|T+S^*\|_p&=\|\mathbb{T}+\mathbb{T}^*\|_p\\
        &\leq 2\max_{\theta\in \mathbb{R}}\|\Re(e^{i\theta}\mathbb{T})\|_p\\
        &=2w_p(\mathbb{T})\\
        &=2w_p\left(\begin{bmatrix}
            0& T\\
            S & 0
        \end{bmatrix}\right)\\
        &\leq 2^{\frac{2}{p}-1}\bigg(\|f(|S|)g(|T^*|)\|_p+\|f(|T|)g(|S^*|)\|_p\bigg)\\
        & +2^{\frac{1}{p}-1}\bigg(\|f^2(|S|)+g^2(|S|)\|_p^p+\|f^2(|T|)+g^2(|T|)\|_p^p\bigg)^{\frac{1}{p}}.
    \end{align*}
\end{proof}

\begin{remark}\label{Remark_2.8}
    If $f(x)=x^{1-t}$, $g(x)=x^t$ in the Corollary \ref{Cor_2.3}, we obtain
     \begin{align*}
    \|T+S^*\|_p&\leq 2^{\frac{1}{p}-1}\bigg(\||S|^{1-t}|T^*|^{t}\|_p+\||T|^{1-t}|S^*|^t\|_p\bigg)\\
        & +{\frac{1}{2}}\bigg(\||S|^{2(1-t)}+|S|^{2t}\|_p^p+\||T|^{2(1-t)}+|T|^{2t}\|_p^p\bigg)^{\frac{1}{p}}.
 \end{align*}
 For $t=\frac{1}{2}$,
 \begin{align*}
     \|T+S^*\|_p \leq 2^{\frac{1}{p}-1}\bigg(\||S|^{\frac{1}{2}}|T^*|^{\frac{1}{2}}\|_p+\||T|^{\frac{1}{2}}|S^*|^\frac{1}{2}\|_p      +\bigg(\|S\|_p^p+\|T\|_p^p\bigg)^{\frac{1}{p}}\bigg).
 \end{align*}
\end{remark}

\begin{remark}
   Letting $p=\infty$ in the Remark \ref{Remark_2.8}, we obtain \eqref{Eq_1.13}
     \begin{align*}
    \|T+S^*\|&\leq \frac{1}{2}\bigg(\||S|^{1-t}|T^*|^{t}\|+\||T|^{1-t}|S^*|^t\|\bigg)\\
        & +{\frac{1}{2}}\bigg(\max\bigg\{\||S|^{2(1-t)}+|S|^{2t}\|,\||T|^{2(1-t)}+|T|^{2t}\|\bigg\}\bigg).
 \end{align*}
 For $t=\frac{1}{2}$,
 \begin{align*}
     \|T+S^*\| \leq \frac{1}{2}\bigg(\||S|^{\frac{1}{2}}|T^*|^{\frac{1}{2}}\|+\||T|^{\frac{1}{2}}|S^*|^\frac{1}{2}\|\bigg)     +\max\bigg\{\|S\|,\|T\|\bigg\}.
 \end{align*}
\end{remark}

\begin{remark}
    From Remark \ref{Remark_2.8}
     \begin{align*}
     \|T+S^*\|_p \leq 2^{\frac{1}{p}-1}\bigg(\||S|^{\frac{1}{2}}|T^*|^{\frac{1}{2}}\|_p+\||T|^{\frac{1}{2}}|S^*|^\frac{1}{2}\|_p         +\bigg(\|S\|_p+\|T\|_p\bigg)^{\frac{1}{p}}\bigg).
 \end{align*}
Replace $S$ by $S^*$
\begin{align}\label{Eq_2.6}
     \|T+S\|_p \leq 2^{\frac{1}{p}-1}\bigg(\||S^*|^{\frac{1}{2}}|T^*|^{\frac{1}{2}}\|_p+\||T|^{\frac{1}{2}}|S|^\frac{1}{2}\|_p   +\bigg(\|S\|_p^p+\|T\|_p^p\bigg)^{\frac{1}{p}}\bigg).
 \end{align}
Also replace $S$ by $-S$ in \eqref{Eq_2.6}, we obtain
\begin{align}\label{Eq_2.7}
     \|T-S\|_p \leq 2^{\frac{1}{p}-1}\bigg(\||S^*|^{\frac{1}{2}}|T^*|^{\frac{1}{2}}\|_p+\||T|^{\frac{1}{2}}|S|^\frac{1}{2}\|_p   +\bigg(\|S\|_p^p+\|T\|_p^p\bigg)^{\frac{1}{p}}\bigg).
 \end{align}
 From \eqref{Eq_2.6} and \eqref{Eq_2.7}, we have 
 \begin{align}\label{Eq_2.8}
    \max\bigg\{ \|T+S\|_p, \|T-S\|_p\bigg\} \leq 2^{\frac{1}{p}-1}\bigg(\||S^*|^{\frac{1}{2}}|T^*|^{\frac{1}{2}}\|_p+\||T|^{\frac{1}{2}}|S|^\frac{1}{2}\|_p   +\bigg(\|S\|_p^p+\|T\|_p^p\bigg)^{\frac{1}{p}}\bigg).
     \end{align}
     The \eqref{Eq_2.8} is a refinement of \cite[Corollary 2.19]{Frakis_Kittaneh_Soltani_LAMA_2025}. 
\end{remark}

In particular if $T$ and $T^*$ are normal operator, then

\begin{align}\label{Eq_2.9}
    \max\bigg\{ \|T+S\|_p, \|T-S\|_p\bigg\} \leq 2^{\frac{1}{p}}\||T|^{\frac{1}{2}}|S|^\frac{1}{2}\|_p   + 2^{\frac{1}{p}-1}\bigg(\|S\|_p^p+\|T\|_p^p\bigg)^{\frac{1}{p}}.
     \end{align}

\begin{remark}
 Letting $p=\infty$ in \eqref{Eq_2.8}, we obtain
    \begin{align*}
         \max\bigg\{ \|T+S\|, \|T-S\|\bigg\} \leq {\frac{1}{2}}\bigg(\||S^*|^{\frac{1}{2}}|T^*|^{\frac{1}{2}}\|+\||T|^{\frac{1}{2}}|S|^\frac{1}{2}\|   +\max\{\|S\|,\|T\|\}\bigg).
    \end{align*}
    If $T$ and $S$ are normal operators, then we have
    \begin{align*}
         \max\bigg\{ \|T+S\|, \|T-S\|\bigg\} \leq \||T|^{\frac{1}{2}}|S|^\frac{1}{2}\|   +\frac{1}{2}\max\{\|S\|,\|T\|\}.
    \end{align*}
    If $T$ and $S$ are positive operators, then we have
    \begin{align*}
        \|T+S\| \leq \frac{1}{2}\max\{\|S\|,\|T\|\}+\|T^{\frac{1}{2}}S^{\frac{1}{2}}\|,
    \end{align*}
    which is a refinement of \eqref{Ineq_Kittaneh_JFA_1997}.
\end{remark}


\end{document}